\documentclass[11pt,a4paper]{article}

\RequirePackage[OT1]{fontenc}
\usepackage{amsfonts, amsmath, amssymb, amsthm,url,dsfont, gensymb,multicol}
\usepackage[margin=3cm,footskip=1cm]{geometry}
\usepackage{graphicx,xcolor}
\usepackage[english]{babel}
\RequirePackage[colorlinks,citecolor=blue,urlcolor=blue]{hyperref}
\usepackage{booktabs}

\RequirePackage[numbers]{natbib}

\usepackage[above,below,section]{placeins}

\usepackage{authblk}
\usepackage{xspace}


%
%
\makeatletter
\renewenvironment{abstract}{%
  \small%
  \providecommand\keywords{%
    \par\medskip\noindent\textit{Keywords:}\xspace}%
  \begin{center}%
    \bfseries \abstractname\vspace{-.5em}\vspace{\z@}%
  \end{center}%
  \quote%
}{\endquote}
\makeatother

\usepackage[shortlabels]{enumitem}
\setlist{
  listparindent=\parindent,
  parsep=0pt,
}


\newtheorem{thm}{Theorem}[section]
\newtheorem{cor}[thm]{Corollary}
\newtheorem{lem}[thm]{Lemma}

\newtheorem{prop}[thm]{Proposition}
\theoremstyle{definition}
\theoremstyle{remark} 

\DeclareMathOperator{\Cov}{Cov}
\DeclareMathOperator{\Var}{\mathsf{Var}}

\newcommand{\dd}{{\mathrm d}}

\newcommand{\E}{\mathbb{E}}

\newcommand{\PP}{\mathcal{P}}

\newcommand{\R}{\mathbb{R}}

\newcommand{\one}{\mathds{1}}

\newcommand{\bfx}{{\mathbf{x}}}
\newcommand{\eps}{\varepsilon}

\def\d{{\mathrm{d}}}

\newcommand{\T}{{\sf T}}



\date{}

\providecommand{\keywords}[1]
{
	\small	
	{\noindent\textit{Keywords: }} #1
}

\providecommand{\ams}[1]
{
	\small	
	{\noindent\textit{2020 Mathematics Subject Classification: }} #1
}

\begin{document}

	\title{A functional central limit theorem for the $K$-function
          with an estimated intensity function}

	\author{Anne Marie Svane, Christophe Biscio, Rasmus Waagepetersen}

\maketitle
	\begin{abstract}
          The $K$-function is arguably the most important functional summary
          statistic for spatial point processes. It is used
          extensively for goodness-of-fit testing and in connection
          with minimum contrast estimation for parametric spatial
          point process models. It is thus pertinent to understand
          the asymptotic properties of estimates of the
          $K$-function. In this paper we derive the functional
          asymptotic distribution for the $K$-function
          estimator. Contrary to previous papers on functional
          convergence we consider the case of an inhomogeneous
          intensity function. We moreover handle the fact that
          practical $K$-function estimators rely on plugging in an
          estimate of the intensity function. This removes two serious
          limitations of the existing literature.
	\end{abstract}

\keywords{estimated intensity; functional central limit theorem; goodness-of-fit test; inhomogeneous $K$-function; point processes; Ripley's $K$-function}

\ams{60F17; 60G55; 60F05}

\section{Introduction}	

Ripley's $K$-function \citep{ripley:77} is the most popular summary of
the second moment structure of a spatial point process. Asymptotic properties of the $K$-function are of interest
when the $K$-function is used for goodness-of-fit test
\cite{heinrich:91} of a proposed
point process model or for deriving asymptotic properties of
parameter estimates obtained from minimum contrast estimating
functions based on the $K$-function
\cite{heinrich:92,guan:sherman:07,waagepetersen:guan:09}.

A complete characterization of the
functional asymptotic distribution of the $K$-function for wide
classes of stationary Gibbs point processes and stationary so-called
conditionally $m$-dependent point processes was obtained in \cite{biscio:svane:22}. The results were derived
assuming known intensity and hence applicable for
goodness-of-fit testing for a specific type of model with a given
intensity. It is, however, in practice often desired to test the goodness-of-fit
of a given type of model considering the intensity an unknown
parameter to be estimated. Then the asymptotic distribution of the
$K$-function needs to be adjusted for the effect of replacing the true
intensity by an estimate.  Functional
convergence of statistics related to the $K$-function for both known and unknown
intensity was considered in \cite{heinrich:91}. However, the approach in this paper relied heavily on the
setting of a stationary Poisson process.

Assuming a constant intensity is often restrictive and \cite{baddeley:moeller:waagepetersen:00} extended the $K$-function to
a wide class of point processes with inhomogeneous intensity
functions. This allowed to study residual second order structure after
filtering out large scale variations due to a non-constant intensity
function. In practice an estimate of the intensity function is plugged
in for the typically unknown intensity function. Parametric
estimates can be obtained using for example composite
likelihood or quasi-likelihood, e.g.\
\cite{schoenberg:05,waagepetersen:07,guan:loh:07,waagepetersen:guan:09,guan:jalilian:waagepetersen:15}. Consistency
and asymptotic normality of parameter estimates is studied in detail
in these papers. For example, \cite{waagepetersen:guan:09} used a framework of non-stationary
$\alpha$-mixing spatial point processes  and established joint asymptotic 
normality of composite likelihood estimates for the intensity and
minimum-contrast estimates of clustering parameters where the contrast
was based on the $K$-function with estimated intensity. However,
\cite{waagepetersen:guan:09} did not consider functional convergence.

In this paper we address limitations of the existing literature by
establishing a functional central limit theorem for the
$K$-function in the case of an unknown possibly non-constant intensity
function. Our main focus is establishing
functional convergence when the intensity function is replaced by a
parametric estimate. We assume
as our starting point the
availability of a multivariate central limit theorem for a random vector
consisting of intensity function parameter estimates and estimates of
the $K$-function with known intensity function for a range of spatial
distances. This assumption is justified by the aforementioned references. 

\section{The $K$-function} \label{sec:K}

Let $\PP\subseteq \R^d$ be a simple point process with intensity function
$\rho(\cdot)>0$ and let $A\subseteq \R^d$ be a set of  positive and finite
volume $|A|$. Assuming further that  $\PP$ is second-order intensity
reweighted stationary \citep{baddeley:moeller:waagepetersen:00}, the
$K$-function is defined for any $r>0$ as 
\begin{equation*}
  K(r) = \frac{1}{|A|}\E \sum_{x\in \PP\cap A}
  \sum_{y\in \PP } \frac{\mathds{1}_{\{0<\|x-y\|\le r\}}}{\rho(x)
    \rho(y)},
\end{equation*}
where the right hand side does not depend on the particular choice
of $A$. The $K$-function can also be expressed as a Palm expection
\[ 
  K(r)= \E_u  \sum_{y\in \PP}\frac{\mathds{1}_{\{0<\|y-u\|\le r\}}}{\rho(y)},
\]
for any $u \in \R^d$, where $\E_u$ denotes the Palm expectation given
$u\in \PP$. These definitions agree with the definition of Ripley's
$K$-function in the stationary case where $\rho(\cdot)$ is constant.

In practice $\PP$ is  only observed inside a bounded
observation window. Throughout this paper, we will consider a
square observation window  $W_n =
[-\frac{1}{2}n^{1/d},\frac{1}{2}n^{1/d}]^d$ of volume $n$ and
write $\PP_n = \PP \cap W_n$. Assuming for a moment that the intensity
function is known, an unbiased estimator of $K(r)$ is given by
\begin{equation}\label{eq:Kestknown}
  \hat{K}_n(r) =  \sum_{x\in \PP_n}
\sum_{y\in \PP_n}
\frac{\mathds{1}_{\{0<\|x-y\|\le r\}}}{\rho(x)\rho(y)} e_n(x,y),
\end{equation}
where $e_n(x,y)$ is an edge correction factor ensuring unbiasedness. A
popular choice is $e_n(x,y)= |W_n \cap W_{n,x-y}|^{-1}$ where
$W_{n,x-y}$ is $W_n$ translated by $x-y$.

In practice $\rho(\cdot)$ is unknown and must be estimated. We assume
that $\rho(\cdot)$ belongs to a parametric model $\rho_\beta(\cdot)$, $\beta \in
\R^p$, so that $\rho(\cdot)=\rho_{\beta^*}(\cdot)$ for some fixed
parameter value $\beta^* \in \R^d$. A common example is the log linear model
$\rho_\beta(\cdot)=\exp( z(u)^\T \beta)$ where for $u \in \R^d$,
$z(u)$ is a $p$-dimensional covariate vector assumed to be observed
within $W_n$. Methods for obtaining consistent and
asymptotically normal estimates of $\beta$ include composite
likelihood and quasi-likelihood, e.g.\
\cite{schoenberg:05,waagepetersen:07,guan:loh:07,waagepetersen:guan:09,guan:jalilian:waagepetersen:15,choiruddin:coeurjolly:waagepetersen:21}. We
denote by $\hat \beta_n$ an estimate of $\beta$ obtained from an
observation of $\PP \cap W_n$. Define
\[   \hat{K}_{n,\beta}(r) =  \sum_{x\in \PP_n}
\sum_{y\in \PP_n}
\frac{\mathds{1}_{\{0<\|x-y\|\le r\}}}{\rho_{\beta}(x)\rho_{\beta}(y)} e_n(x,y).
\]
Then $\hat K_n(r)=\hat K_{n,\beta^*}(r)$. In practice we estimate
$K(r)$ by $\hat K_{n,\hat \beta_n}(r)$. 

We will assume throughout that
\begin{equation}\label{eq:consistency}
\hat \beta_n \text{ and } \hat K_n \text{ are consistent. }
\end{equation}
Moreover, we will often make the following assumption on $\rho(\cdot)$:
  There is an $\eps>0$ and constants $c_1,\ldots,c_4>0$ such that  for all $(\beta,x)\in
	B_\eps(\beta^*)\times \R^{d}$, 
	\begin{equation}\label{eq:rhoassumption}
	c_1<\rho_\beta(x)<c_2,\qquad \left\|\frac{\dd}{\dd
		\beta}\rho_{\beta}(x)\right\|<c_3, \qquad \left\|\frac{\dd^2}{\dd \beta^\T \dd
		\beta}\rho_{\beta}(x)\right\|<c_4.
	\end{equation}
Here $B_{r}(x)$ denotes the Euclidean ball in $\R^d$ of radius $r$
centered at $x$.

We assume existence of joint intensity functions $\rho^{(l)}$,
$l=2,3,4$ \cite[e.g.\ ][]{rasmus} and define normalized joint intensities
$g^{(l)}(u_1,\ldots,u_l)=\rho^{(l)}(u_1,\ldots,u_l)/\prod_{i=1}^l\rho(u_i)$,
  $l=2,3,4$. In particular, $g^{(2)}$ is the pair correlation function, which will simply be denoted by $g$. Assuming that $g$ is translation-invariant,
  $g(u,v)=g(u-v)$, the $K$-function
  can be written as $K(r)=\int_{B_r(0)}g(h) \dd h$. We will also assume translation invariance of $g^{(3)}$ and
$g^{(4)}$. We note that there are wide classes of point processes
for which translation invariant normalized joint intensities exist
such as log
Gaussian Cox processes \cite{moeller:syversveen:waagepetersen:98},
inhomogeneous Neyman-Scott processes \cite{waagepetersen:07}, and
determinantal point processes \cite{lavancier:moeller:rubak:15}. 

We say that $\PP$ hast fast decaying correlations if for any
$p,q \ge 1$ with $p+q \le 4$, there exists a function $\phi_{p,q}:[0,\infty[
\rightarrow [0,\infty[$ such that $\int_{0}^\infty
\phi_{p,q}(r)r^{d-1} \dd r < \infty$ and 
\begin{equation}\label{eq:fastdecay} | g^{(p+q)}(\bfx,\bfx')-g^{(p)}(\bfx))g^{(q)}(\bfx')| \le
  \phi_{p,q}(d(\bfx,\bfx')) 
  \end{equation} 
where $\bfx$ and $\bfx'$ are point configurations of cardinality $p$
and $q$, respectively, and  $d(\bfx,\bfx')=\min_{u \in \bfx,v \in \bfx'}\|u-v\|$, and we
define $g^{(1)}(u)=1$, $u \in \R^d$. If
$\PP$ has fast decaying correlations \eqref{eq:fastdecay} it follows easily that
\begin{equation}\label{eq:decayg}
 \int_{\R^d}|g(h)-1| \dd h < \infty,
\end{equation}
\begin{equation}\label{eq:decayg3}
\sup_{x\in\R^d}\int_{\R^d}|g^{(3)}(0,x,z)-g(x)| \dd z < \infty
\end{equation}
and
\begin{equation}\label{eq:decayg4}
\sup_{u_1,u_2\in\R^d}\int_{\R^d}|g^{(4)}(0,u_1,u_4,u_2+u_4)-g(u_1)g(u_2)| \dd u_4 < \infty.
\end{equation}
We finally need a condition of bounded normalized densities:
\begin{equation}\label{eq:boundedg}  g^{(k)} \text{ are bounded for } k=2,3,4.\end{equation}

\section{Central limit theorem with estimated intensity function}\label{sec:clt}

In this section we demonstrate how central limit theorems for $K$-functions with unknown intensity can be deduced from analoguous results with known intensity under suitable assumptions.  We postpone a discussion of point process types satisfying the various assumptions to Section \ref{sec:pp}. 

\subsection{Finite dimensional distributions}
Consider for any $k \ge 1$ and $0 < r_1<r_2<\cdots<r_k< \infty$, the
vector 
\[ U_n=((\hat \beta_n- \beta^*)^\T,\hat
K_n(r_1)-K(r_1),\ldots,\hat
K_n(r_k)-K(r_k))^\T.\] Throughout this paper we assume
 asymptotic normality,
\begin{equation}\label{eq:clt}
|W_n|^{1/2}C_{n}U_n \rightarrow N(0,I_{p+k}), 
\end{equation}
for a sequence of matrices $C_n$ where $C_n^{-1}(C_n^{-1})^{\T}/|W_n|$
approximates $\Var U_n$. We discuss these assumptions in  more detail
in Section~\ref{sec:pp}.

The objective in this section is to obtain a central limit theorem for 
\[
V_n=((\hat \beta_n-\beta^*)^\T,\hat K_{n,\hat \beta_n}(r_1)-K
(r_1),\ldots,\hat K_{n,\hat \beta_n}(r_k)-K
(r_k))^\T\] 
as well as consistency of $\hat K_{n,\hat \beta_n}(r)$ for $r \ge 0$.
For this we employ a first order Taylor expansion to obtain
\begin{equation}\label{eq:KTaylor} \hat K_{n,\hat \beta_n}-K(r) = H_{n,\tilde \beta_{n,r}}(r) (\hat
  \beta_n-\beta^*)+ \hat K_{n}(r)-K(r), \end{equation}
where $\|\tilde \beta_{n,r}- \beta^*\| \le \|\hat \beta_{n}- \beta^* \|$ and
\begin{equation}\label{eq:Hn}
H_{n,\beta}(r)= -\sum_{x\in \PP_n}
\sum_{y\in \PP_n}
\frac{\mathds{1}_{\{0<\|x-y\|\le
    r\}}}{\rho_{\beta}(x)\rho_{\beta}(y)} \frac{\dd}{\dd \beta^\T}\log[\rho_{\beta}(x)\rho_{\beta}(y)]
e_n(x,y).
\end{equation}

Let $\tilde B_n$ denote the matrix with columns $\tilde \beta_{n,r_i}$
and let $H_n(\tilde B_n)$ denote the $k \times p$ matrix with rows $H_{n,\tilde \beta_{n,r_i}}(r_i)$,
$i=1,\ldots,k$. Further let
\[ A_n= \begin{bmatrix}I_p & 0_{p \times k}\\ H_n(\tilde B_n) & I_k\end{bmatrix}\]
where $0_{p\times k}$ is a $p \times k$ matrix of zeros. Then,
since $A_n$ is invertible,
\[ |W_n|^{1/2}C_n A_n^{-1} V_n = |W_n|^{1/2}C_{n} U_n .\]
Thus, by \eqref{eq:clt}, $|W_n|^{1/2}C_n A_n^{-1} V_n$ is
asymptotically $N(0,I_{p+k})$. 
This can be used to 
  construct a joint confidence ellipsoid for
  $((\beta^*)^\T,K(r_1),\ldots,K(r_k))^\T$.

We can estimate $H_{n,\tilde \beta_{n,r}}(r)$ by
$H_{n,\hat \beta_{n,r}}(r)$ according to the following
proposition which is also useful for establishing consistency of
$\hat K_{n,\hat \beta_n}$.

\begin{prop}\label{prop:H_est}
 Assume that \eqref{eq:consistency}, \eqref{eq:rhoassumption} and \eqref{eq:fastdecay} are satisfied. Then
 $H_{n,\tilde \beta_{n,r}}(r)-H_{n,\beta^*}(r)$ and 
 $H_{n,\tilde \beta_{n,r}}(r)-H_{n,\hat \beta_{n,r}}(r)$   converge to zero in
 probability.  Further, $\bar H_{n}(r)=\E H_{n,\beta^*}(r)$ is
   bounded. Finally, 
 $\Var H_{n,\beta^*}(r)$ is
   $O(n^{-1})$. 
\end{prop}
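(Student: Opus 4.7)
The plan is to address the four claims in the stated order: first two mean-value arguments based on \eqref{eq:rhoassumption} and the consistency of $\hat K_n$, then a direct computation via Campbell's formula for the mean, and finally the $O(n^{-1})$ variance bound, which is the main technical point.

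For the first two claims I would expand $H_{n,\beta}(r)$ to first order in $\beta$ around $\beta^*$ by a coordinate-wise mean value theorem. Differentiating \eqref{eq:Hn} shows that each entry of $\frac{\dd}{\dd\beta^\T}H_{n,\beta}(r)$ is of the form $\sum_{x\neq y\in\PP_n}\one_{\{0<\|x-y\|\le r\}}Q_\beta(x,y)\,e_n(x,y)$, where $Q_\beta(x,y)$ is a rational function of $\rho_\beta,\frac{\dd}{\dd\beta}\rho_\beta,\frac{\dd^2}{\dd\beta^\T\dd\beta}\rho_\beta$ evaluated at $x$ and $y$. By \eqref{eq:rhoassumption}, $|Q_\beta|$ is bounded by a constant $C$ uniformly over $B_\eps(\beta^*)\times\R^{2d}$, hence
\[
\sup_{\beta\in B_\eps(\beta^*)}\left\|\frac{\dd}{\dd\beta^\T}H_{n,\beta}(r)\right\|\le C\sum_{x\neq y\in\PP_n}\one_{\{0<\|x-y\|\le r\}}e_n(x,y)\le Cc_2^2\,\hat K_n(r),
\]
which is $O_P(1)$ by \eqref{eq:consistency}. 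Combined with $\|\tilde\beta_{n,r}-\beta^*\|\le\|\hat\beta_n-\beta^*\|\to 0$ in probability, and an analogous bound for $\hat\beta_{n,r}$, this yields both claimed convergences.

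For $\bar H_n(r)$ I would apply Campbell's formula for the second factorial moment,
\[
\bar H_n(r)=-\int_{W_n\times W_n}\one_{\{0<\|x-y\|\le r\}}\,g(x-y)\,\left.\frac{\dd}{\dd\beta^\T}\log[\rho_\beta(x)\rho_\beta(y)]\right|_{\beta^*}\!e_n(x,y)\,\dd x\,\dd y,
\]
and use that \eqref{eq:rhoassumption} bounds each entry of $\frac{\dd}{\dd\beta}\log\rho_\beta$ by $c_3/c_1$. The substitution $h=x-y$ combined with the defining property of $e_n$ collapses the remaining double integral to $\int_{B_r(0)}g(h)\,\dd h=K(r)$, which is finite by \eqref{eq:decayg}, giving $\|\bar H_n(r)\|\le 2(c_3/c_1)K(r)$.

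The variance bound is the main obstacle. Writing $H_{n,\beta^*}(r)=-\sum_{x\neq y\in\PP_n}\phi(x,y)$ with
\[
\phi(x,y)=\frac{\one_{\{0<\|x-y\|\le r\}}}{\rho(x)\rho(y)}\left.\frac{\dd}{\dd\beta^\T}\log[\rho_\beta(x)\rho_\beta(y)]\right|_{\beta^*}\!e_n(x,y),
\]
I would expand $\E[H_{n,\beta^*}(r)H_{n,\beta^*}(r)^\T]-\bar H_n(r)\bar H_n(r)^\T$ via Campbell's formula applied to ordered quadruples $(x,y,x',y')\in\PP_n^4$, and collect terms by the cardinality of $\{x,y,x',y'\}$. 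On the support of each integrand, every entry of $\phi\phi^\T$ is bounded by $Ce_n(x,y)e_n(x',y')=O(n^{-2})$. The two-point and three-point contributions are $O(n^{-1})$ by the substitution $h=x-y$ (and analogous substitutions), using boundedness of $g$ together with the integrability bounds \eqref{eq:decayg} and \eqref{eq:decayg3}. The four-point contribution, after subtracting $\bar H_n\bar H_n^\T$, has integrand proportional to $\phi\phi^\T\rho^{\otimes 4}[g^{(4)}-g\cdot g]$ and is $O(n^{-1})$ by \eqref{eq:decayg4} under the change of variables $(x,y-x,x'-x,y'-x')$. The technical difficulty is the careful bookkeeping: in each group of terms one must isolate the single pair of spatial variables that integrates against the volume $|W_n|=n$, while the remaining three are absorbed into finite constants coming from \eqref{eq:decayg}--\eqref{eq:decayg4}, producing the decisive $n\cdot n^{-2}=n^{-1}$ scaling.
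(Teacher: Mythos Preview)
Your proof is correct and follows essentially the same approach as the paper: a coordinate-wise mean value argument for the first two convergences (with the same key step of bounding the raw double sum by $c_2^2\hat K_n(r)$), Campbell's formula for the boundedness of $\bar H_n(r)$, and a second-moment expansion stratified by the cardinality of $\{x,y,x',y'\}$ for the variance. The only notable difference is that the paper does not spell out the variance computation but simply invokes Lemma~1 in \cite{waagepetersen:guan:09}, whereas you sketch the $2$-, $3$-, and $4$-point decomposition explicitly; your sketch is correct, though note that for the three-point term you do not actually need \eqref{eq:decayg3} since both difference vectors are confined to $B_r(0)$ and boundedness of $g^{(3)}$ from \eqref{eq:boundedg} suffices.
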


\begin{proof}
Define $h_{x,y}(\beta)=\rho_{\beta}(x)\rho_{\beta}(y)
\frac{\dd}{\dd \beta^\T}\log[\rho_{\beta}(x)\rho_{\beta}(y)]$ and let
$h_{i,x,y}(\beta)$ be the $i$th component of $h_{x,y}$. Then
\[ h_{i,x,y}(\tilde \beta_{n,r})-h_{i,x,y}(\beta^*)= h'_{i,x,y}(\bar
  \beta_{i,x,y,n})(\tilde \beta_{n,r}-\beta^*)\]
with $h'_{i,x,y}(\beta)= \frac{\dd}{\dd \beta^\T}
h_{i,x,y}(\beta)$ and $\|\bar \beta_{i,x,y,n}-\beta^*\| \le
\|\tilde{\beta}_{n,r}-\beta^*\| $. Next, $h_{x,y}(\tilde \beta_{n,r})-h_{x,y}(\beta^*)= (\tilde \beta_{n,r}-\beta^*)^\T h'_{x,y}(\bar
B_{x,y,n})^\T$ where $h'_{x,y}(\bar
B_{x,y,n})$ is $p \times p$ with rows $h'_{i,x,y}(\bar
  \beta_{i,x,y,n})$.
Further,
\begin{equation}\label{eq:Hn_assump}
 \| H_{n, \tilde \beta_{n,r}}(r)-H_{n, \beta^*}(r) \| \le \|\tilde
\beta_{n,r}-\beta^*\| \sup_{x,y}\|h_{x,y}'(\bar B_{x,y,n} )\| \bigg| \sum_{x,y\in
  \PP_n}\mathds{1}_{\{0<\|x-y\|\le r\}} e_n(x,y)\bigg| .
\end{equation}
On the right hand side, the first factor is bounded by $\|\hat
\beta_n-\beta^*\|$ and hence converges to zero in
probability. The second is bounded in probability since eventually $\|
\bar \beta_{i,x,y,n} - \beta^*\| \le \|
\hat \beta_n - \beta^*\|  \le \eps$ with high probability. The last factor is bounded by $c_2^{2}\hat K_{n}(r)$, where $c_2$ is the constant from \eqref{eq:rhoassumption}, and this is bounded in
probability by the consistency assumption on $\hat K_{n}(r)$.

The convergence of $H_{n,\tilde
	\beta_{n,r}}(r)- H_{n,\hat \beta_n}(r)$ follows from the
      previous result, the analogous result with $\tilde \beta_{n,r}$ replaced by $\hat \beta_n$, and the decomposition
\[ H_{n,\tilde \beta_{n,r}}(r)-H_{n,\hat \beta_{n,r}}(r)= H_{n,\tilde
	\beta_{n,r}}(r)- H_{n,\beta^*}(r)+H_{n, \beta^*}(r)-H_{n,\hat
	\beta_{n,r}}(r).\]
By application of the Campbell formula,
\[ \|\bar H_{n}(r)\| \le \int_{W_n^2} \frac{\mathds{1}_{\{0<\|x-y\|\le
    r\}}}{\rho_{\beta^*}(x)\rho_{\beta^*}(y)|W_{n,x-y}|}g(x-y) \left\|\frac{\dd}{\dd \beta^\T}(\rho_{\beta}(x)\rho_{\beta}(y))\Big|_{\beta=\beta^*}\right\|
\dd x \dd y \le c K(r) \]
for some $c>0$.
That $\Var 	H_{n,\beta^*}(r)$ is
   $O(n^{-1})$ follows from Lemma~1 in \cite{waagepetersen:guan:09}.
\end{proof}

\begin{cor}
Assume that \eqref{eq:consistency}, \eqref{eq:rhoassumption} and \eqref{eq:fastdecay} hold.
Then $\hat K_{n,\hat
		\beta_n}(r)$ is consistent.
\end{cor}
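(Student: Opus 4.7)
The plan is to derive the corollary almost immediately from the Taylor expansion \eqref{eq:KTaylor} together with Proposition~\ref{prop:H_est}. Specifically, \eqref{eq:KTaylor} gives the decomposition
\[
\hat K_{n,\hat\beta_n}(r)-K(r) \;=\; H_{n,\tilde\beta_{n,r}}(r)\,(\hat\beta_n-\beta^*) \;+\; \bigl(\hat K_n(r)-K(r)\bigr),
\]
so consistency reduces to showing that each summand converges to zero in probability.

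The second summand is handled directly by the consistency assumption \eqref{eq:consistency} on $\hat K_n(r)$.

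For the first summand, the strategy is to argue that $H_{n,\tilde\beta_{n,r}}(r)$ is bounded in probability and then exploit $\hat\beta_n-\beta^*=o_p(1)$ from \eqref{eq:consistency}. First, Proposition~\ref{prop:H_est} tells us that $\bar H_n(r)=\E H_{n,\beta^*}(r)$ is bounded uniformly in $n$ and that $\Var H_{n,\beta^*}(r)=O(n^{-1})$; applying Chebyshev's inequality to $H_{n,\beta^*}(r)-\bar H_n(r)$ yields $H_{n,\beta^*}(r)=O_p(1)$. Combining this with the convergence $H_{n,\tilde\beta_{n,r}}(r)-H_{n,\beta^*}(r)\to 0$ in probability (also from Proposition~\ref{prop:H_est}) gives $H_{n,\tilde\beta_{n,r}}(r)=O_p(1)$. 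Multiplying by $\hat\beta_n-\beta^*=o_p(1)$ and applying Slutsky's lemma componentwise then produces $H_{n,\tilde\beta_{n,r}}(r)(\hat\beta_n-\beta^*)=o_p(1)$.

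There is no real obstacle here; the corollary is essentially a bookkeeping exercise on top of the proposition. The only minor point worth being explicit about is the passage from the bound on the mean and the variance of $H_{n,\beta^*}(r)$ to stochastic boundedness of $H_{n,\tilde\beta_{n,r}}(r)$, since we want to avoid redoing any of the Campbell-formula computations already carried out in the proof of Proposition~\ref{prop:H_est}.
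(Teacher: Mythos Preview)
Your proof is correct and follows essentially the same route as the paper: both use the Taylor expansion \eqref{eq:KTaylor}, invoke Proposition~\ref{prop:H_est} to control $H_{n,\tilde\beta_{n,r}}(r)$ (via its closeness to $H_{n,\beta^*}(r)$, the boundedness of $\bar H_n(r)$, and the $O(n^{-1})$ variance), and finish with $O_P(1)\cdot o_P(1)=o_P(1)$ together with consistency of $\hat K_n(r)$. The paper's write-up inserts an extra intermediate replacement of $\tilde\beta_{n,r}$ by $\hat\beta_n$ before passing to $\beta^*$, but this is cosmetic; your more direct bound $H_{n,\tilde\beta_{n,r}}(r)=O_P(1)$ is equivalent.
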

\begin{proof}
Combining \eqref{eq:KTaylor}
with Proposition~\ref{prop:H_est}, we have
\begin{align*}
&\hat K_{n,\hat \beta_n}-K(r) = H_{n,\hat \beta_{n,r}}(r) (\hat
  \beta_n-\beta^*)+ \hat K_{n, \beta^*}(r)-K(r) +o_P(1) \\
= & H_{n,\beta^*}(r) (\hat
  \beta_n-\beta^*)+ \hat K_{n}(r)-K(r) +o_P(1)\\
= &\bar H_{n}(r) (\hat
  \beta_n-\beta^*)+ (H_{n, \beta^*}(r) - \bar H_{n})(\hat
  \beta_n-\beta^*) +\hat K_{n}(r)-K(r) +o_P(1)
=  o_P(1)
\end{align*}
by consistency of $(\hat \beta_n^\T,\hat K_{n}(r))$, boundedness of $\bar H_{n}(r)$,  and since
$H_{n,\beta^*}(r)-\bar H_{n}$ is bounded in probability. 
\end{proof}

\subsection{Functional convergence}\label{sec:functional}
Fix $0\le r_0 < R <\infty$. We will prove functional convergence of
the process $\{\sqrt{n}(\hat{K}_{n,\hat
  \beta_n}(r)-K(r))\}_{r\in[r_0,R]}$ with estimated intensity under
the assumption that functional convergence holds for the process
$\{\sqrt{n}(\hat{K}_{n}(r)-K(r))\}_{r\in [r_0,R]}$ with known intensity. 

Define $\bar H_{n}(r)=\E H_{n,\beta^*}(r)$ as in
Proposition~\ref{prop:H_est}.
We make the following further assumptions:
\begin{enumerate}[label=(\roman*)]
\item \label{itemi} The matrices $C_n$ converge to a fixed invertible matrix
$C$. 
 
 \item \label{itemii} The matrices $\bar H_{n}(r)$ converge uniformly to a function $H(r)$.

\item \label{itemiii} The process
  $\{\sqrt{n}(\hat{K}_{n}(r)-K(r))\}_{r\in[r_0,R]}$ converges in
  Skorokhod topology to a Gaussian process with a limiting covariance
  function $c(s,t)$, $s,t \ge 0$.
\end{enumerate}

Note that by Proposition~\ref{prop:H_est}, \ref{itemii} together with \eqref{eq:consistency} and \eqref{eq:rhoassumption}
implies that $A_n \to A$ in probability where $A$ is defined as $A_n$,
but with $H_n(\tilde B_n)$ replaced by the matrix with rows
  $H(r_i)$, $i=1,\ldots,k$. Combining further with \eqref{eq:clt} we obtain
\begin{equation} \label{eq:obs}
\sqrt{n}V_n \rightarrow N(0,A  \Sigma A^T) 
\end{equation}
where $\Sigma=C^{-1}(C^{-1})^{\T}$.

Our main result is the following functional central limit theorem. The proof is postponed to Section~\ref{sec:proof}.
\begin{thm}\label{tight}
	Suppose that \eqref{eq:consistency}, \eqref{eq:rhoassumption}, \eqref{eq:fastdecay}, \eqref{eq:boundedg},
	 \eqref{eq:clt}, and \ref{itemi}-\ref{itemiii}
        hold. Then $\big\{\sqrt{n}\big(\hat{K}_{n,\hat
          \beta_n}(r)-K(r)\big)\big\}_{r\in[r_0,R]}$ converges in
        Skorokhod topology to a Gaussian process with limiting covariance
        function given by
\begin{equation}\label{eq:lim_cov}
 \tilde{c}(s,t)=H(s) \Sigma_{11} H(t)^\T+H(s) \Sigma_{2,t}+H(t) \Sigma_{2,s} +c(s,t),
 \end{equation}
 where $\Sigma_{11}$ is the limiting covariance matrix of
 $\sqrt{n}(\hat \beta_{n}-\beta^*)$ and $\Sigma_{2,r}$ is the  limiting
 covariance vector between $\sqrt{n}(\hat \beta_{n}-\beta^*)$ and
 $\sqrt{n}\hat K_{n}(r)$, $r \in [r_0,R]$.
 \end{thm}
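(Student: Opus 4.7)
The plan is to reduce the functional convergence of $\sqrt n(\hat K_{n,\hat\beta_n}(r)-K(r))$, via the first-order expansion \eqref{eq:KTaylor}, to the joint functional convergence of the ``known-intensity'' building blocks. Concretely, I would rewrite
\[ \sqrt n\big(\hat K_{n,\hat\beta_n}(r)-K(r)\big)=H(r)\sqrt n(\hat\beta_n-\beta^*)+\sqrt n\big(\hat K_n(r)-K(r)\big)+R_n(r), \]
with $R_n(r)=\big[H_{n,\tilde\beta_{n,r}}(r)-H(r)\big]\sqrt n(\hat\beta_n-\beta^*)$, and then show $\sup_{r\in[r_0,R]}|R_n(r)|\to 0$ in probability. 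Since $\sqrt n(\hat\beta_n-\beta^*)=O_P(1)$ by \eqref{eq:clt} and \ref{itemi}, it suffices to prove that $\sup_{r\in[r_0,R]}\|H_{n,\tilde\beta_{n,r}}(r)-H(r)\|\to 0$ in probability.

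This uniform convergence is the key step. I would decompose the difference as $[H_{n,\tilde\beta_{n,r}}(r)-H_{n,\beta^*}(r)]+[H_{n,\beta^*}(r)-\bar H_n(r)]+[\bar H_n(r)-H(r)]$. Assumption \ref{itemii} handles the last term. For the first, the bound \eqref{eq:Hn_assump} is dominated by $\|\hat\beta_n-\beta^*\|$ times a factor bounded in probability times $\hat K_n(r)$; replacing $\hat K_n(r)$ by $\hat K_n(R)$ yields a bound uniform in $r\in[r_0,R]$ that is $o_P(1)$. The middle term is the main obstacle, since Proposition~\ref{prop:H_est} only provides a pointwise variance rate $O(n^{-1})$. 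I would finish it by a chaining argument: using \eqref{eq:rhoassumption}, for $s\le r$ one has $\|H_{n,\beta^*}(r)-H_{n,\beta^*}(s)\|\le C\big(\hat K_n(r)-\hat K_n(s)\big)$ and analogously $\|\bar H_n(r)-\bar H_n(s)\|\le C(K(r)-K(s))$. For any $\eta>0$, pick a finite grid $r_0=s_0<\dots<s_N=R$ on which $K$ increases by less than $\eta$; Chebyshev at each $s_j$, combined with these grid increments and uniform consistency of $\hat K_n$ on $[r_0,R]$ (a consequence of \ref{itemiii}), bounds $\sup_r\|H_{n,\beta^*}(r)-\bar H_n(r)\|$ by $O(\eta)+o_P(1)$, whence letting $\eta\downarrow 0$ after $n\to\infty$ gives the claim.

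With the remainder controlled, the next ingredient is joint functional convergence of the pair $\big(\sqrt n(\hat\beta_n-\beta^*),\,\sqrt n(\hat K_n(\cdot)-K(\cdot))\big)$ in $\mathbb R^p\times D([r_0,R])$ to a Gaussian limit $(\xi,Z)$ whose covariance structure is determined by $\Sigma=C^{-1}(C^{-1})^\T$ and $c(s,t)$. Finite-dimensional convergence to the correct joint Gaussian follows from \eqref{eq:clt} and \ref{itemi} applied to any finite collection of $r$-values, the cross-covariances being precisely the $\Sigma_{2,r_i}$ blocks of $\Sigma$; tightness of the pair is the product of the trivial tightness of the finite-dimensional first component and the tightness from \ref{itemiii}.

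To finish, I would apply the continuous mapping theorem to $(b,f)\mapsto H(\cdot)b+f(\cdot)$ from $\mathbb R^p\times D([r_0,R])$ to $D([r_0,R])$. The limit function $H$ is continuous on $[r_0,R]$: each $\bar H_n$ is continuous in $r$ (its Campbell representation has an integrand bounded via \eqref{eq:rhoassumption} and integrable by \eqref{eq:decayg}), and continuity passes to the uniform limit by \ref{itemii}. Continuity of $H$ makes the map continuous in the Skorokhod topology, so the principal part converges weakly to $H(r)\xi+Z(r)$, a centered Gaussian process whose covariance function is exactly the $\tilde c(s,t)$ of \eqref{eq:lim_cov}. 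Slutsky's lemma combined with the uniform smallness of $R_n$ then yields the claimed functional convergence of $\sqrt n(\hat K_{n,\hat\beta_n}-K)$.
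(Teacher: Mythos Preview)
Your proposal is correct and follows the same overall architecture as the paper: the decomposition $\sqrt n(\hat K_{n,\hat\beta_n}-K)=Z_n+R_n$ with $Z_n(r)=H(r)\sqrt n(\hat\beta_n-\beta^*)+\sqrt n(\hat K_n(r)-K(r))$, the three-term splitting of $H_{n,\tilde\beta_{n,r}}-H$ to show $\|R_n\|_\infty=o_P(1)$, and a Slutsky-type conclusion in $D[r_0,R]$ are all exactly as in the paper. Two execution details differ. First, for $\sup_r\|H_{n,\beta^*}(r)-\bar H_n(r)\|\to 0$ the paper (Lemma~\ref{lem:Hunif}) controls the grid increments by a direct variance bound on the annulus sum (Appendix~\ref{sec_Hvar}), whereas you bound them by $C(\hat K_n(r)-\hat K_n(s))$ and appeal to uniform consistency of $\hat K_n$ extracted from \ref{itemiii}; your route is a legitimate shortcut that recycles \ref{itemiii} instead of repeating a fourth-moment computation. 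Second, for the weak convergence of $Z_n$ the paper verifies the tightness criterion of \cite[Thm.~13.2]{billingsley} directly via continuity of $H$ and the modulus inequality \eqref{eq:sum_omega}, while you pass through joint convergence of $\big(\sqrt n(\hat\beta_n-\beta^*),\sqrt n(\hat K_n-K)\big)$ in $\R^p\times D[r_0,R]$ and apply the continuous mapping theorem; both are standard, your version being slightly more conceptual, with the only point needing care that $(b,f)\mapsto H(\cdot)b+f$ is Skorokhod-continuous precisely because $H$ is continuous.
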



\section{Point processes satisfying the assumptions}\label{sec:pp}

The most complete characterization of asymptotic normality
 is obtained in the case of constant intensity, see
Section~\ref{sec:constantintensity}. We consider further the case of
the commonly used \cite[e.g.\ ][]{Baddeley:Rubak:Wolf:15} log-linear model for the intensity function in Section~\ref{sec:loglinear}.

\subsection{Constant intensity}\label{sec:constantintensity}
		In the case of constant intensity $\rho_\beta(x) =
                \beta$,  the standard estimator for $\beta$ is
                $\hat{\beta}_n = \#(\PP_n)/|W_n|$, where $\#$ denotes
                cardinality, see e.g.\ \cite{chiu}. Functional
                convergence with known intensity was
                established in \cite{biscio:svane:22} for the class of stationary conditionally $m$-dependent point processes having exponential decay of correlations  and satisfying two extra conditions \cite[Cond.\ {\bf (M)}]{biscio:svane:22} and \cite[Cond.\ {\bf (R)}]{biscio:svane:22}. Exponential decay of correlations means that \eqref{eq:fastdecay} holds for all $p$ and $q$ with $\phi_{p,q}$ being an exponential function, see \cite[Sec.\ 1.1]{yogesh} for the precise definition.
		
		Assumption \eqref{eq:clt} can be verified assuming
                only exponential decay of correlations and
                \cite[Cond.\ {\bf (M)}]{biscio:svane:22}, and hence
                applies to all point processes discussed in
                \cite[Sec.\ 2.2.2]{yogesh} (and also \cite[Sec.\
                2.2.1]{yogesh}). Assuming additionally  conditional $m$-dependence, \ref{itemi} can be verified when replacing \cite[Cond.\ {\bf (R)}]{biscio:svane:22} by the following:
		For any $r_0\le r_1<r_2 \le R$, define events 
		\begin{align*}
		F_{1} ={}& \lbrace  \PP_{(5\tilde{R})^{d}} = \emptyset  \rbrace \\
		F_{2} ={}& \left\lbrace \forall x,y \in  \PP_{(5\tilde{R})^{d}}: x=y\vee \|x-y\|> r_1 \right \rbrace \cap \left\lbrace  \exists x,y \in \PP_{(3\tilde{R})^{d}}: 0<\|x-y\|\le r_2 \right\rbrace\\
		F_{3} ={}& \left\lbrace  \PP_{(5\tilde{R})^{d}}\backslash \PP_{(3\tilde{R})^{d}} = \emptyset \right \rbrace  \cap \left\lbrace  \# (\PP_{(3\tilde{R})^{d}}) = 1 \right\rbrace
		\end{align*}
		where $\tilde R = \max\{m,R\}$.
		Then we require
		\begin{enumerate}
			\item[{\bf (R1)}] 
			\begin{equation*}
			\E\big[\min_{i \in \{1,2\}} P\big( \PP_{(5\tilde{R})^d} \in F_i\,|\, \sigma(\Lambda , \PP \setminus W_{\tilde{R}^d})\big)  \big] > 0,
			\end{equation*}
			\item[{\bf (R2)}] 
			\begin{equation*}
			\E\big[\min_{i \in \{1,3\}} P\big( \PP_{(5\tilde{R})^d} \in F_i\,|\, \sigma(\Lambda , \PP \setminus W_{\tilde{R}^d})\big)  \big] > 0,
			\end{equation*}
		\end{enumerate}
		where $\Lambda$ is the random measure from the definition of conditional $m$-dependence. Examples of conditionally $m$-dependent point processes satisfying all assumptions are log-Gaussian Cox processes with exponentially decaying covariance function  and Mat\'{e}rn cluster processes, see \cite[Sec.\ 4]{bchs20}.  
		
		Condition \eqref{eq:clt}  can also be shown for the class of Gibbs processes considered in \cite{biscio:svane:22}. Recent developments \cite[Thm.\ 3]{otto} (or \cite[Thm.\ 4.11]{benes}) allow a generalization to Gibbs processes satisfying  \cite[Cond.\ {\bf (A)}]{otto}, which is more general than the assumption in \cite{biscio:svane:22}, c.f.\ the discussion in \cite[Rem.\  3.5]{benes}. Positive definiteness of $\Sigma$ can be obtained as in \cite[Prop.\ 6.2]{biscio:svane:22} if the process satisfies {\bf (R1)} and {\bf (R2)} with $\tilde{R}$ larger than the interaction radius and $\Lambda$ trivial. Verifying this is often straightforward for a given Papangelou intensity.
			
The functional convergence \ref{itemiii} established for the point
  processes considered in \cite{biscio:svane:22} generalizes to Gibbs processes satisfying \cite[Cond.\ {\bf (A)}]{otto} since fast decay of correlations can be derived from \cite[Thm.\ 3.4]{benes}, and condition {\bf (M)} can be derived from \cite[Lem.\ 2.6]{benes}.
Condition \ref{itemii} is easily verified, noting that 
\begin{equation} \label{eq:H}
H_{n,\beta}(r) = -2\beta^{-1} \hat{K}_{n,\beta}(r), \qquad H(r) =-2(\beta^*)^{-1} {K}(r).
\end{equation}
The condition \ref{itemi} follows from conditional $m$-dependence and
  {\bf (R1)} and {\bf (R2)}. We then obtain the following corollary.
\begin{cor}\label{cor:constant}
	For all point processes considered in \cite{biscio:svane:22} and  Gibbs processes satisfying \cite[Cond.\ {\bf (A)}]{otto}, if additionally {\bf (R1)} and {\bf (R2)} are satisfied, then $\big\{\sqrt{n}\big(\hat{K}_{n,\hat \beta_n}(r)-K(r)\big)\big\}_{r\in[r_0,R]}$ converges in Skorokhod topology to a centered Gaussian process with covariance structure given by \eqref{eq:lim_cov}.
\end{cor}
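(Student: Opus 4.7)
The plan is to verify, for each of the two listed classes of point processes, all the hypotheses of Theorem~\ref{tight} and then invoke that theorem. The key simplification is that $\rho_\beta(x)=\beta$ is constant, which collapses several assumptions to triviality.

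Several hypotheses are immediate. Assumption \eqref{eq:rhoassumption} holds trivially because $\rho_\beta$ is constant with bounded first derivative and vanishing second derivative, so all three bounds are valid on a small neighbourhood of $\beta^*$. Consistency of $\hat\beta_n=\#(\PP_n)/|W_n|$ in \eqref{eq:consistency} follows from ergodicity (or the Campbell formula together with the $O(n^{-1})$ variance bound), and consistency of $\hat K_n(r)$ follows from its unbiasedness combined with fast decay. Crucially, condition \ref{itemii} is automatic: by \eqref{eq:H} and unbiasedness of $\hat K_n(r)$,
\[
\bar H_n(r)=\E H_{n,\beta^*}(r)=-2(\beta^*)^{-1}\E\hat K_n(r)=-2(\beta^*)^{-1}K(r)=H(r)
\]
for every $n$, so the convergence is uniform on $[r_0,R]$.

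The remaining hypotheses \eqref{eq:fastdecay}, \eqref{eq:boundedg}, \eqref{eq:clt}, \ref{itemi} and \ref{itemiii} are exactly the content of the discussion preceding the corollary. For the class treated in \cite{biscio:svane:22}, exponential decay of correlations implies \eqref{eq:fastdecay}, and \eqref{eq:boundedg} is part of the standing hypotheses; the multivariate CLT \eqref{eq:clt} is obtained from exponential decay together with Cond.\ {\bf (M)}; convergence of $C_n$ to an invertible $C$, i.e.\ \ref{itemi}, follows from conditional $m$-dependence together with {\bf (R1)} and {\bf (R2)} (the former yielding the limit, the latter its positive definiteness); and \ref{itemiii} is precisely the functional CLT of \cite{biscio:svane:22}. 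For Gibbs processes satisfying \cite[Cond.\ {\bf (A)}]{otto}, fast decay \eqref{eq:fastdecay} and boundedness \eqref{eq:boundedg} follow from \cite[Thm.\ 3.4]{benes} together with standard Papangelou bounds, \eqref{eq:clt} follows from \cite[Thm.\ 3]{otto}, and the positive definiteness needed for \ref{itemi} is obtained from \cite[Prop.\ 6.2]{biscio:svane:22} once {\bf (R1)}, {\bf (R2)} are assumed with $\tilde R$ exceeding the interaction radius.

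The main obstacle will be the functional convergence \ref{itemiii} for Gibbs processes under the weaker \cite[Cond.\ {\bf (A)}]{otto}. My strategy is not to redo the proof from scratch, but to observe that the argument in \cite{biscio:svane:22} uses its Gibbsian hypothesis only through two structural consequences: fast decay of correlations, now supplied by \cite[Thm.\ 3.4]{benes}, and Cond.\ {\bf (M)}, now supplied by \cite[Lem.\ 2.6]{benes}. Once these two ingredients are in hand, the tightness and finite-dimensional CLT arguments of \cite{biscio:svane:22} transfer verbatim. With every hypothesis of Theorem~\ref{tight} verified, we conclude functional convergence to a centered Gaussian process on $[r_0,R]$, and the limiting covariance is given by the formula \eqref{eq:lim_cov}.
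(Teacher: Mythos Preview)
Your proposal is correct and follows essentially the same route as the paper: the corollary has no standalone proof but is deduced from Theorem~\ref{tight} once the hypotheses \eqref{eq:consistency}--\eqref{eq:boundedg}, \eqref{eq:clt}, and \ref{itemi}--\ref{itemiii} are verified via the discussion in Section~\ref{sec:constantintensity}, with \ref{itemii} immediate from \eqref{eq:H} and the extension of \ref{itemiii} to the broader Gibbs class handled exactly as you describe, by importing fast decay from \cite[Thm.~3.4]{benes} and Cond.~{\bf (M)} from \cite[Lem.~2.6]{benes} into the tightness argument of \cite{biscio:svane:22}.
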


For any point process with fast decay of correlations, the limiting
covariance with known intensity $\Sigma$ can be computed from
Campbell's formulae. The computations are omitted, but are very similar to those in Appendix \ref{sec:convergence}. This yields 
\begin{align}\nonumber
\lim_{n\to \infty} n \Var(\hat \beta_n) =& \beta^2\int_{\R^{d}} (g(x) - 1) \d x + \beta,\\ \nonumber
\lim_{n\to \infty} n \Cov(\hat \beta_n, \hat K_n (r)) =& \beta\int_{\R^{2d}}(g^{(3)}(o,x,y)- g(x) )\one_{\{\|x\|\le r\}} \d x  \d y + 2 K(r),\\
\nonumber
\lim_{n\to \infty} n \Cov ( \hat{K}_{n}(r_1), \hat{K}_{n}(r_2)) =&\int_{\R^{3d}} \left(g^{(4)}(o,x,y,z) - g(x)g(y-z)\right)\one_{\{\|x\|\le r_1,\|y-z\|\le r_2\}} \d x  \d y \d z\\ \nonumber
&+ \frac{4}{ \beta}\int_{\R^{2d}} g^{(3)}(o,x,y)  \one_{\{\|x\|\le r_1,\|y\|\le r_2\}} \d x  \d y\\ \label{eq:cov_known}
&+ \frac{2}{ \beta^2}\int_{\R^d} g(x)  \one_{\{\|x\|\le r_1\wedge r_2\} } \d x.
\end{align}
 Note that
all integrals converge due to the fast decay of correlations assumption.

With unknown intensity, the limiting covariance structure is \eqref{eq:lim_cov}.
 Using this, we obtain from \eqref{eq:cov_known},
\begin{align}\nonumber
&\lim_{n\to \infty} n \Cov ( \hat{K}_{n,\hat\beta_n}(r_1), \hat{K}_{n,\hat \beta_n}(r_2)) = \lim_{n\to \infty} n^{-1} \Cov ( \hat{K}_{n}(r_1), \hat{K}_{n}(r_2))\\ \nonumber
&- {2} \int_{\R^{2d}} \left(g^{(3)}(o,x,y)-g(x) \right) \left( K(r_1)\one_{\{\|x\|\le r_2\}} +K(r_2) \one_{\{\|x\|\le r_1\}}\right) \d x \d y\\ 
&+{4}K(r_1)K(r_2)\left( \int_{\R^d} (g(x) - 1) \d x - \beta^{-1}\right). \label{cov_estimated}
\end{align}

For Poisson processes in $\R^2$, these formulas reduce to the covariance formulas given in \cite{heinrich:91}:
\begin{align} \nonumber
&\lim_{n\to \infty} n \Cov ( \hat{K}_{n}(r_1), \hat{K}_{n}(r_2)) = 2\pi (r_1\wedge r_2)^2/\rho^2 + 4\pi^2r_1^2 r_2^2/ \rho,\\ 
&\lim_{n\to \infty} n \Cov ( \hat{K}_{n,\hat\beta_n}(r_1), \hat{K}_{n,\hat \beta_n}(r_2)) = 2\pi (r_1\wedge r_2)^2/\rho^2. \label{eq:poisson_cov}
\end{align}
For general point processes, the explicit covariance formulas are
 more complicated but may be evaluated numerically after plugging in
 estimates of the normalized joint intensities. For instance, in case
 of log-Gaussian Cox processes and Neyman-Scott processes, explicit
 parametric expresssions for the normalized joint intensities are
 available \citep{moeller:syversveen:waagepetersen:98,jalilian:16} enabling parametric estimation of these.

Note that by \eqref{eq:poisson_cov}, in case of a
	Poisson process, it is actually beneficial to
estimate the intensity since the asymptotic variance is smaller in the
unknown intensity case. We expect this to hold for more general point
processes. For example, for point processes satisfying $g^{(3)}(0,x,y)
\ge g(x)$ for all $x,y\in \R^d$, the middle term in
\eqref{cov_estimated} gives a negative contribution to the
variance. This holds for instance for Poisson processes, shot noise
Cox processes and log-Gaussian Cox processes with non-negative
covariance, see formulas for $\rho^{(k)}$ in \cite{CMW}. Moreover, if
$g$ is sufficiently close to $1$, the last term in
\eqref{cov_estimated} is also negative, {reducing further the} point-wise variance when the intensity is estimated.

\subsection{Log-linear intensity}\label{sec:loglinear}

The log-linear model $\rho_\beta(u)=\exp(z(u)^\T \beta)$ is the
default model when covariates $z(u)$, $u \in W_n,$ are available for
explaining variation in the intensity function. We consider here the
case where $\hat \beta_n$ is the first order composite likelihood
estimate 
obtained by maximizing the likelihood function of a Poisson process
with intensity function $\rho_\beta$
\cite[see][for theoretical and practical details]{schoenberg:05,waagepetersen:07,guan:loh:07,waagepetersen:guan:09,choiruddin:coeurjolly:waagepetersen:21}. 
Following for example \cite{waagepetersen:guan:09}, under certain conditions,
\[ |W_n|\bar S_n (\hat \beta_n - \beta^*) = e_n(\beta^*)+ o_{P}(1) \]
where $\bar S_n$ is the normalized sensitivity
\begin{equation}\label{eq:sensitivity}
 \bar S_n = \frac{1}{|W_n|}\int_{W_n} z(u)z(u)^\T \exp(z(u)^\T \beta^*) \dd
    u
    \end{equation}
and
\begin{equation}\label{eq:en}
e_n(\beta^*)=\sum_{u \in \PP_n \cap W_n} z(u) - \int_{W_n}z(u)
\rho_{\beta^*}(u) \dd u 
\end{equation}
is the Poisson composite likelihood score.
Let $\Delta K= (\hat K_n(r_i)-K(r_i))_{i=1}^k$ and
\[\Sigma_n=\Var((|W_n|^{-1/2}e_n(\beta^*)^\T,|W_n|^{1/2}\Delta K^\T)^\T).\]
Then, e.g.\ assuming $\alpha$-mixing \cite{waagepetersen:guan:09}, 
\[ \Sigma_{n}^{-1/2} (|W_n|^{-1/2}e_n(\beta^*),|W_n|^{1/2} \Delta
  K^\T)^\T \rightarrow N(0,I_{p+k}).\] Hence, letting $B_n$ be block diagonal with blocks $\bar S_n$
and $I_k$, 
$ |W_n|^{1/2}(\Sigma_n/|W_n|)^{-1/2}  B_n U_n$
converges to $N(0,I_{p+k})$. We can thus take $C_n=  (\Sigma_n/|W_n|)^{-1/2} B_n$.

 The validity of assumption~\ref{itemi}
depends on the behaviour of $z$ on the infinite domain
$\R^d$. The normalized sensitivity $\bar S_n$ is for example obviously a spatial average
that has a limiting value $\bar S$ when $z$ is a realization of an
ergodic process. In this situation, we can also show (Appendix~\ref{sec:convergence}) under
\eqref{eq:decayg}-\eqref{eq:decayg4}  that $\bar H_n$ and $\Sigma_n$ have limits $H$ and
$\Sigma$. Hence $C_n$ has a limit $C$ too. Specifically, $\bar{H}_{n,\beta^*}(r)$ converges to $-2 K(r)
  \bar z$ where $\bar z= \lim_{n \rightarrow \infty}
  {|W_n|}^{-1}\int_{W_n}z(v) \dd v$ and \ref{itemii} is satisfied.  {The invertibility of $C$ remains an assumption.} 
 
Having established \eqref{eq:clt} and \ref{itemi}, \ref{itemiii} holds since the 
tightness proof from
\cite{biscio:svane:22} goes through in the case of inhomogeneous point
processes with exponential decay of correlations satisfying
\cite[Cond.\ {\bf (M)}]{biscio:svane:22} and intensity function satisfying \eqref{eq:rhoassumption}. Indeed,  one must show Lemma 6.2, Lemma 7.2 and Lemma 7.3 of \cite{biscio:svane:22}. Lemmas 6.2 and Lemma 7.2 are proved in the exact same way using the lower bound on $\rho_{\beta^*}$. Lemma 7.3 also follows in the same way by noting that the proof of \cite[Thm. 1.11]{yogesh} carries over to the case of non-stationary processes. 

\section{Simulation study}

To emphasize the importance of taking into account the effect of
estimating the intensity, we conduct a small scale simulation study
for a Poisson process and a Mat{\'e}rn cluster
process on a sequence of increasing
square observation windows of sidelengths 1, 2, 4 and 8. We use an
intensity of 200 for both types of point processes. As in
\cite{biscio:svane:22} we consider a Kolmogorov-Smirnov goodness-of-fit test
statistic $\sup_{r \in [0,R]}|\hat K_{n,\hat \beta_n}(r)- \pi r^2|$
for the null hypothesis of a homogeneous Poisson process. We
  choose $R=0.05$ to reflect a reasonable upper lag on the unit square
  window. For the Mat{\'e}rn process we use a parent intensity of 25, on
  average 8 offspring for each parent, and a uniform dispersal density on a disc of radius 0.2.

Table~\ref{simstudy} reports rejection probabilities when
  the Kolmogorov-Smirnov test is rejected on the nominal 5\%
  level. The rejection probabilities are computed over 10000
  simulations where for each simulation, the critical value of the
  test is determined from the asymptotic distribution of
  $\sqrt{n}(\hat K_{n,\hat \beta_n}- \pi r^2)$ under the null distribution. Here we consider both
  results obtained with the asymptotic variance formula for estimated
  intensity and the
  asymptotic variance formula pretending the estimated intensity is
  the true intensity (see
  \eqref{eq:poisson_cov}). In both asymptotic variance formulas, the
  unknown intensity is replaced by the estimated
  intensity. In case of the Poisson process, the actual levels of
  the test are close to the nominal level for all window sizes when
  the correct asymptotic variance is used. However, assuming
  erroneously known intensity, the rejection probabilities are far too
  small, completely invalidating the goodness-of-fit test. For the
  Mat{\'e}rn process, using the variance formula for known intensity leads
  to a loss of power of the goodness of fit test.
\begin{table}
\begin{tabular}{l|c|c|c|c}
Window side length & 1 & 2 & 4 & 8 \\ \hline
Poisson (assuming estm.\ intensity)  & 0.053 & 0.053 & 0.052 & 0.051\\
Poisson (assuming known intensity)  & 0.0015 & 0.0011 & 0.0008 & 0.0008 \\\hline
Mat{\'e}rn (assuming estm.\ intensity)  & 0.63 & 1.00 & 1.00 & 1.00\\
Mat{\'e}rn (assuming known intensity)  & 0.31 & 0.96 & 1.00 & 1.00 
\end{tabular}
\caption{Rejection probabilities for Kolmogorov-Smirnov test for
  Poisson process (upper two rows) and Mat{\'e}rn  process (lower two rows)
  on increasing observation windows. For each type of process, the
  first row is using 
  asymptotic variance for estimated intensity and the second row is
  using asymptotic variance
  assuming known intensity.}\label{simstudy}
\end{table}
\section{Proof of Theorem \ref{tight}}\label{sec:proof}
We will need  the following  uniform version of Proposition \ref{prop:H_est}.
	\begin{lem}\label{lem:Hunif}
		Under the assumptions \eqref{eq:consistency}, \eqref{eq:rhoassumption}, \eqref{eq:fastdecay}, \eqref{eq:boundedg}, 
		$\|H_{n,\beta^*}-\bar H_n \|_\infty$ goes to zero in probability. Moreover, $\bar H_n$ is continuous, and so is $H$ if \ref{itemii} is satisfied.
	\end{lem}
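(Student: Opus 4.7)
The lemma has three parts: uniform convergence of $H_{n,\beta^*}$ to $\bar H_n$ in probability, continuity of $\bar H_n$, and continuity of $H$ under \ref{itemii}. The plan is to tackle them in the order (i) continuity of $\bar H_n$, (ii) uniform convergence, (iii) continuity of $H$, since the uniform-in-$n$ modulus of continuity obtained in (i) is precisely what upgrades the pointwise convergence from Proposition \ref{prop:H_est} into uniform convergence in (ii).

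First I would apply Campbell's formula to the double sum in \eqref{eq:Hn} to obtain
\[
\bar H_n(r) = -\int_{W_n^2}\mathds{1}_{\{0<\|x-y\|\le r\}}\frac{\dd}{\dd \beta^\T}\log[\rho_\beta(x)\rho_\beta(y)]\bigg|_{\beta=\beta^*} e_n(x,y)\, g(x-y)\,\dd x\,\dd y.
\]
Under \eqref{eq:rhoassumption} and \eqref{eq:boundedg} the integrand is bounded in absolute value by a constant $M$ depending only on $c_1,c_2,c_3$ and $\sup g$. A change of variable $h=x-y$ together with $\int_{W_n}e_n(x,x-h)\,\dd x \le 1$ yields $\|\bar H_n(r)-\bar H_n(s)\|\le M\,|B_r(0)\setminus B_s(0)|$ for $r_0\le s\le r\le R$, so $\bar H_n$ is Lipschitz on $[r_0,R]$ with a constant $L$ independent of $n$.

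Next, for the uniform convergence I would combine a grid argument with a deterministic bound on the increments of $H_{n,\beta^*}$. From \eqref{eq:Hn} and \eqref{eq:rhoassumption},
\[
\|H_{n,\beta^*}(r)-H_{n,\beta^*}(s)\|\le C\bigl(\hat K_n(r)-\hat K_n(s)\bigr),\qquad r_0\le s\le r\le R,
\]
for a constant $C$ depending only on $c_1,c_2,c_3$. Given $\varepsilon>0$, I would pick a grid $r_0=s_0<\cdots<s_N=R$ of mesh $\delta$, and for $r\in[s_j,s_{j+1}]$ apply the triangle inequality to get
\[
\|H_{n,\beta^*}(r)-\bar H_n(r)\| \le C\bigl(\hat K_n(s_{j+1})-\hat K_n(s_j)\bigr) + \max_k\|H_{n,\beta^*}(s_k)-\bar H_n(s_k)\| + L\delta.
\]
Since $g$ is bounded by \eqref{eq:boundedg}, $K$ is itself Lipschitz on $[r_0,R]$, so the first summand is at most $C\,\mathrm{Lip}(K)\,\delta + 2C\max_k|\hat K_n(s_k)-K(s_k)|$. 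The finite maxima over the grid vanish in probability by \eqref{eq:consistency} for $\hat K_n$ and by Chebyshev applied to the bound $\Var H_{n,\beta^*}(s_k)=O(n^{-1})$ from Proposition \ref{prop:H_est}. Sending $n\to\infty$ first and then $\delta\to 0$ concludes the uniform convergence.

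Finally, under \ref{itemii} the function $H$ is a uniform limit of the continuous functions $\bar H_n$ and is therefore continuous. The main obstacle is controlling the oscillation of the random process $H_{n,\beta^*}$ between grid points; this is overcome by the deterministic $\hat K_n$-bound on increments above, which transfers tightness of $\hat K_n$ (a consequence of boundedness of $g$ and consistency of $\hat K_n$) to tightness of $H_{n,\beta^*}$ and thus reduces the problem to pointwise convergence on a finite grid.
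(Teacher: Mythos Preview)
Your argument is correct and follows the same three-term grid decomposition as the paper, with the same treatment of the grid-point term (Chebyshev via the $O(n^{-1})$ variance from Proposition~\ref{prop:H_est}) and of the $\bar H_n$-oscillation term (Campbell's formula giving a Lipschitz bound uniform in $n$). The one genuine difference is how you control the oscillation of the \emph{random} process $H_{n,\beta^*}$ between grid points. The paper bounds $\sup_{r\in[r_i,r_{i+1}]}\|H_{n,\beta^*}(r)-H_{n,\beta^*}(r_i)\|$ by an annulus sum $X_{i,n}$, computes $\E X_{i,n}\le C\eta$ and $\Var X_{i,n}\le C\eta/n$ directly (this is the content of Appendix~\ref{sec_Hvar}, a fourth-order Campbell computation using \eqref{eq:boundedg} and \eqref{eq:decayg4}), and then applies Chebyshev and sums over the $k$ subintervals. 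You instead use the deterministic monotonicity bound $\|H_{n,\beta^*}(r)-H_{n,\beta^*}(s)\|\le C(\hat K_n(r)-\hat K_n(s))$, write $\hat K_n(s_{j+1})-\hat K_n(s_j)$ as a $K$-increment plus grid-point errors $\hat K_n(s_k)-K(s_k)$, and invoke the assumed consistency \eqref{eq:consistency} of $\hat K_n$ together with boundedness of $g$ (so that $K$ is Lipschitz). Your route is more economical: it recycles hypotheses already in force and avoids the separate variance computation of Appendix~\ref{sec_Hvar}. The paper's route, on the other hand, gives an explicit $O(\eta/n)$ bound per subinterval and does not lean on the consistency of $\hat K_n$ for this step.
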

	\begin{proof}[Proof of Lemma \ref{lem:Hunif}]
		Let $\delta>0$ be given and let $\eta>0$ be a parameter to be chosen later. Choose $r_0 < r_1 < ... <r_k=R$ such that $|r_i-r_{i+1}|\leq \eta $ and $k\eta \le 2(R-r_0)$. Then
		\begin{align} \label{term1}
                  P\Big(\sup_{r\in [r_0,R]} \|H_{n,\beta^*}(r) -  \bar H_n(r) \| \ge \delta \Big) \le &  P\Big( \sup_{i=0,\ldots,k-1}\sup_{r\in [r_i,r_{i+1}]}  \| H_{n,\beta^*}(r)  - H_{n,\beta^*}(r_i) \| \ge \delta/3 \Big) \\ \label{term2}
                  &+ P\Big(\sup_{i=0,\ldots,k} \|H_{n,\beta^*}(r_i) -  \bar H_n(r_i) \| \ge  \delta/3 \Big)\\ \label{term3}
		 &+  P\Big(\sup_{i=0,\ldots,k-1}\sup_{{r\in [r_i,r_{i+1}]}} \|\bar H_{n}(r_i) - \bar H_{n}(r)  \| \ge  \delta/3  \Big). 
		\end{align}
        	{With $r_i<r$, }
        \begin{equation}\label{eq:something}
        H_{n,\beta^*}(r) - H_{n,\beta^*}(r_i)  =  -\sum_{x,y\in \PP_n} \frac{\one_{A(r,r_i)}(x-y)}{|W_n\cap W_{n,x-y}| \rho_{\beta^*} (x)^2  \rho_{\beta^*} (y)^2}\frac{\d}{\d \beta}( \rho_{\beta^*} (x)  \rho_{\beta^*} (y)) 
        \end{equation}
        where $A(r,t) $ for $r<t$ denotes the annulus $B_t(0) \backslash B_r(0)$ with volume $|A(r,t)|\leq C_1 |t-r|$ where $C_1$ is independent of $t$ as long as $t\leq R$. 
        It follows that
				\begin{equation} \label{eq:varHdiff}
		\sup_{r\in  [r_i,r_{i+1}]} \|H_{n,\beta^*}(r) - H_{n,\beta^*}(r_i) \| \leq  \sum_{x,y\in \PP_n} \frac{\one_{A(r_i,r_{i+1})}(x-y)}{|W_n\cap W_{n,x-y}| \rho_{\beta^*} (x)^2  \rho_{\beta^*} (y)^2}\left\|\frac{\d}{\d \beta}( \rho_{\beta^*} (x)  \rho_{\beta^*} (y)) \right\|.
		\end{equation}
		Let $X_{i,n}$ denote the right hand side. Campbell's formula shows that  $ \E X_{i,n}\le C_2 \eta$, where $C_2$ is independent of $n$ and $r_i$.  Moreover, the computation in Appendix \ref{sec_Hvar} shows that $\Var(X_{i,n}) \le C_3\eta n^{-1}$. Choose $\eta <\delta/(6C_32$. Then by Chebyshev's inequality, 
		$$P\Big(\sup_{r\in [r_i,r_{i+1}]} \|  H_{n,\beta^* }(r) -  H_{n,\beta^* }(r_i) \| \ge \delta/3  \Big) \le P(|X_{i,n}| \ge \delta/3)  \le P(|X_{i,n}-\E X_{i,n}| \ge \delta/6 ) \le \frac{36 C_3 \eta }{n \delta^2},$$
		so \eqref{term1} is bounded by $36 k
                C_3\eta/(n\delta^2)$, which tends to zero as $n\to
                \infty$  since $k\eta$ is bounded.

Proposition \ref{prop:H_est} shows that $\Var H_{n,\beta^*}(r) $ is $O(n^{-1})$, and it is easily verified (by computations similar to Appendix \ref{sec_Hvar}) that the upper bound is uniform in $r$ for $r\in [r_0,R]$. By Chebyshev's inequality, there is a $C_4>0$ such that
		\begin{equation*}
		P\Big(\sup_{i=0,\ldots,k} \|H_{n,\beta^*}(r_i) -  \bar H_n(r_i) \| \geq  \delta/3\Big) \leq \frac{9(k+1)C_4}{n \delta^2},
		\end{equation*}	
		so \eqref{term2} goes to zero for $n\to 0$ for any
                fixed $\eta$.
		
 Taking expectations and applying Campbell's formula in \eqref{eq:something}, we get
		\begin{align} \nonumber
		&\sup_{r\in [r_i,r_{i+1}]} \| \bar H_{n }(r_i) - \bar H_{n }(r) \| \\ \nonumber
		&\le  \int_{W_n^2}  \frac{\one_{A(r_i,r_{i+1})}(u) }{|W_n\cap W_{n,u}| \rho_{\beta^*} (x) \rho_{\beta^*} (x-u)}\left\|\frac{\d}{\d \beta}( \rho_{\beta^*} (x)  \rho_{\beta^*} (x-u)) \right\| g(u) \d x\d u\\ 
		&\le C_5\eta \label{eq:Hcont}
		\end{align}
		for some $C_5$ independent of $r_i$ and $n$. Thus, choosing  $\eta < \delta/(3C_5)$, \eqref{term3} vanishes.
				
		Finally, \eqref{eq:Hcont} shows that $\bar H_n$ is continuous and if uniform convergence in \ref{itemii} is satisfied, $H$ must also be continuous.
		
	\end{proof}

The proof of Theorem \ref{tight} uses some definitions of Skorokhod space, which we briefly recall here, see \cite[Sec. 12]{billingsley} for details.
The Skorokhod space $D[r_0,R]$ of cadlag functions on $[r_0,R]$ is a separable metric space with metric $\mu$ given by
\begin{equation*}
\mu(f_1,f_2) = \inf_{\lambda} \{|\lambda - I|_{\infty} \vee |f_1-f_2\circ \lambda|_{\infty}\},
\end{equation*}
where the infimum runs over all strictly increasing, continuous
bijections $\lambda: [r_0,R] \to [r_0,R]$, $I$ is the identity map,
and $|\cdot|_\infty$ is the sup norm. In the rest of this
  section, unless other mentioned, functions will be restricted to the
  domain $[r_0,R]$, i.e.\ $|f|_{\infty}= \sup_{r \in [r_0,R]}|f(r)|$.

The tightness condition we apply makes use of the cadlag modulus of
continuity $\omega_{f}'(\delta)$ defined by 
\begin{equation}\label{eq:def_omega}
\omega_{f}'(\delta) = \inf_{\substack{t_1<\dotsm<t_k\\ |t_i - t_{i-1}|>\delta}}\max_{i=1,\ldots,k} \sup_{s,t\in [t_{i-1},t_i)} |f(s)-f(t)|. 
\end{equation}
We will need the following property: If $f_2$ is continuous on
$[r_0,R]$, then it is uniformly continuous. Hence there is a function
$g_{f_2}(\delta)$ with $\lim_{\delta\to 0}g_{f_2}(\delta) = 0$ such
that $|s-t|\le \delta$ implies $|f_2(s)-f_2(t)|\le
g_{f_2}(\delta)$. Since it is enough to take the infimum in
\eqref{eq:def_omega} over all partitions with $|t_i-t_{i-1}|\le
2\delta$, we get for any cadlag $f_1$
\begin{equation}\label{eq:sum_omega}
\omega_{f_1+f_2}'(\delta) \le   \omega_{f_1}'(\delta) + g_{f_2}(2\delta).
\end{equation}

\begin{proof}[Proof of Theorem \ref{tight}]
	
	Recall the decomposition
	\[ \hat K_{n,\hat \beta_n}-K(r) = H_{n,\tilde \beta_{n,r}}(r) (\hat
	\beta_n-\beta^*)+ \hat K_{n}(r)-K(r). \]
	The proof relies on the observation that $H_{n,\tilde \beta_{n,r}}(r)$ converges to a continuous deterministic function $H(r)$, $\sqrt{n}(\hat
	\beta_n-\beta^*)$ is constant in $r$ and converges in
        distribution, and $\sqrt{n} (\hat K_{n}(r)-K(r))$ converges in
        Skorokhod space. Thus, the proof below can be viewed as a
        stochastic process analogue to Slutsky's theorem.
	
	To simplify,  we  write
	\begin{equation}\label{eq:slutsky}
	\sqrt{n}(\hat K_{n,\hat \beta_n}-K(r) ) = H_{n,\tilde \beta_{n,r}}(r)Y_n +  \sqrt{n}(\hat K_{n}(r)- K(r)) =(H_{n,\tilde \beta_{n,r}}(r)-{H}(r) ) Y_n + Z_n(r)
	\end{equation}
	where $Y_n = \sqrt{n} (\hat \beta_n-\beta^*)$ and converges in distribution to a Gaussian vector $Y$ by assumption {\eqref{eq:obs}}, and 
	\begin{equation*}
	Z_n(r) = {H}(r) Y_n + \sqrt{n}(\hat K_{n}(r) - K(r)).
	\end{equation*}

	By \cite[Thm. 3.1]{billingsley}, in order to show functional convergence, 
	it is enough to show:
	\begin{itemize}
		\item[a.] Convergence of $\mu\big(Z_n,\sqrt{n}\big(\hat K_{n,\hat \beta_n}-K) \big) \big) \to 0$ in probability.
		\item[b.] Convergence of $Z_n$ in distribution in Skorokhod topology 
		to a Gaussian process with the covariance structure given by \eqref{eq:lim_cov}.
	\end{itemize}
	
	To show a., note that $\mu(f_1,f_2)\le |f_1-f_2|_\infty$, so
	\begin{equation*}
	\mu\big(Z_n, \sqrt{n}\big(\hat K_{n,\hat \beta_n}-K \big)\big)\le \big| \big(H_{n,\tilde \beta_{n,r}}- H \big) Y_n \big|_\infty \leq \big\| H_{n,\tilde \beta_{n,r}}-  H\big\|_\infty \|Y_n\|.
	\end{equation*}
{	Since $Y_n\to Y$ in distribution, $\limsup_n P(\|Y_n\|\ge  M) \le P(\|Y\|\ge
	M)$ and hence $\|Y_n\|$ is bounded in probability.
	 It remains to show that  $\|H_{n,\tilde \beta_{n,r}}- H\|_\infty$ goes to zero in probability. We write
	 	\begin{equation}\label{eq:oldiii}
	 \|H_{n, \tilde{\beta}_{n,r}}-  H\|_\infty \le \|H_{n, \tilde{\beta}_{n,r}}-H_{n,\beta^*} \|_\infty  + \|H_{n,\beta^*}-\bar H_n \|_\infty +  \|\bar H_{n}- H \|_\infty .
	 	\end{equation}
		 	 The first term goes to zero in probability because the bound in \eqref{eq:Hn_assump} is uniform in $r$ since $\hat{K}_n(r) \le \hat K_n(R)$, the middle term goes to zero in probability by Lemma \ref{lem:Hunif}, and the third term goes to zero by \ref{itemii}. 
 }

	To show b., note that convergence of finite dimensional distributions follows from the observation \eqref{eq:obs}. It remains to show tightness. According to \cite[Thm. 13.2]{billingsley}, tightness of a sequence $Z_n$ is equivalent to the following two conditions:
	
	\begin{itemize}
		\item[1.] $\lim_{a\to \infty}\limsup_n P(|Z_n|_{\infty}\ge a) = 0$.
		\item[2.] For any $\eps>0$: $\lim_{\delta\to 0} \limsup_n P(\omega_{Z_n}'(\delta)\ge \eps) = 0$.
	\end{itemize}	
	
	To show 1., note
	\begin{align*}
	&P(|H Y_n +  \sqrt{n}(\hat K_n- K) |_{\infty} \ge a) \\
	&\le P(|H Y_n|_{\infty} \ge {a}/{2}) + P(|\sqrt{n}(\hat K_n- K)|_{\infty} \ge {a}/{2}).
	\end{align*}
	Taking $\lim_{a\to \infty}\limsup_n$, the latter term vanishes by tightness of $\sqrt{n}(\hat K_n(r)- K(r))$. 		
	The first term satisfies
	\begin{align*}
	P\left(|H Y_n|_{\infty}\ge a/2\right) \le  P\left(\|Y_n\| \ge \sqrt{a/2}\right) + P\left(\|H\|_\infty \ge \sqrt{a/2}\right).
	\end{align*}
	Clearly, $\lim_{a\to \infty} P(\|H\|_\infty \ge a/2) =0$ since $H$ is continuous and hence bounded on $[r_0,R]$ by Lemma \ref{lem:Hunif}. 
	Moreover, since $Y_n\to Y$  in distribution,
	\begin{equation*}
	\lim_{a\to \infty}\limsup_n P(\|Y_n\| \ge \sqrt{a/2})\le\lim_{a\to \infty}P(\|Y\| \ge \sqrt{a/2})= 0.
	\end{equation*}

	To show 2., we use \ref{itemii} and \eqref{eq:sum_omega} to obtain a ($p$-dimensional) function $g_{H}$ such that
	\begin{equation*}
	\omega_{\sqrt{n} (\hat K_n-K)+H Y_n}'(\delta) \le 	\omega_{ \sqrt{n} (\hat K_n-K)}'(\delta) + \|Y_n\|\|g_{ H}(2\delta)\|.
	\end{equation*}
	It follows that
	\begin{multline*}
	P\left(\omega_{Y_nK+\sqrt{n} (\hat
          K_n-K)}'(\delta)\ge \eps \right) 
	\le P\left(	\omega_{ \sqrt{n} (\hat K_n-K)}'(\delta)\ge \eps/2 \right) + P\left(\|Y_n\| \ge (\eps/2) \|g_{ H}(2\delta)\|^{-1}\right).
	\end{multline*}
	Taking $\lim_{\delta\to 0} \limsup_n$ yields 0 in both terms because of tightness of $\sqrt{n} (\hat K_n-K)$ and convergence in distribution of $Y_n$.

\end{proof}

\bibliography{../lit}

\bibliographystyle{plainnat}

\appendix

\section{Convergence results for the log-linear model}\label{sec:convergence}

Throughout this appendix $z$ is a realization from a stationary, bounded, and ergodic random field on $\R^d$.

\subsection{Convergence of $\bar H_{n}$}
By \eqref{eq:Hn} and Campbells formula,
\begin{align*} 
\bar H_{n}(r) &= - \E \sum_{x\in \PP_n}
\sum_{y\in \PP_n}
\frac{\mathds{1}_{\{0<\|x-y\|\le
    r\}}}{\rho_{\beta^*}(x)\rho_{\beta^*}(y)} \frac{\dd}{\dd \beta^\T}\log(\rho_{\beta}(x)\rho_{\beta}(y))|_{\beta=\beta^*}
e_n(x,y) \\
&=  -\int_{W_{n}^2} \frac{\mathds{1}_{\{0<\|x-y\|\le
    r\}}}{|W_n \cap W_{n,x-y}|}g(x-y) [z(x)+z(y)] \dd x \dd y\\
&= -   2 \int_{B_r(0)} g(u) \frac{1}{|W_n \cap W_{n,u}|}\int_{W_n \cap W_{n,u}} z(v) \dd v \dd u.
\end{align*}
The inner integral is a spatial average of $z(v)$ over
$W_n\cap W_{n,u}$. By the Pointwise Ergodic Theorem \cite[Thm. 10.14]{kallenberg}, this converges to a limiting value $\bar z$ since $z(\cdot)$ is a realization of a stationary ergodic process. It then follows by boundedness of $z$ and dominated convergence that $\bar
H_{n,\beta^*}(r)$ converges to $-2 K(r) \bar z$. Clearly, the convergence is uniform since 
\begin{align*} 
\|\bar H_{n}(r) - H(r) \| \leq   2 \int_{B_R(0)} |g(u)| \left\| \frac{1}{|W_n \cap W_{n,u}|}\int_{W_n \cap W_{n,u}} z(v) \dd v - \bar z \right\|\dd u,
\end{align*}
where the right hand side is independent of $r$.

\subsection{Convergence of $\bar S_n$}
The normalized sensitivity  $\bar S_n$ in \eqref{eq:sensitivity} is obviously a spatial average
that has a limiting value $\bar S$ when $z$ is ergodic.

\subsection{Upper left block of $\Sigma_n$}
The variance of $|W_n|^{-1/2} e_n(\beta^*)$ is by \eqref{eq:en}
\[
\frac{1}{|W_n|} \Var \bigg( \sum_{x \in \PP_n \cap
  W_n} z(x)\bigg) = \bar S_n + \frac{1}{|W_n|}\int_{W_n^2} z(x)z(y)^\T \rho_{\beta^*}(x)\rho_{\beta^*}(y) [g(x-y)-1] \dd x \dd y,
\]
where the first term on the right hand side converges to a limiting
value $\bar S$. 
The double integral can be rewritten as
 \[
  \int_{\R^d}   [g(v)-1] \frac{1}{|W_{n}|}   \int_{W_{n}\cap W_{n,v}} z(u) z(u-v)^\T  \rho_{\beta^*}(u)\rho_{\beta^*}(u-v) \dd u \dd v.
 \] 
 Since ${|W_{n}\cap W_{n,v}|}/{|W_{n}|} \to 1$, the inner integral converges pointwise to a spatial average which is uniformly bounded in $n$ and $v$ by the assumptions on $z$. Dominated convergence and \eqref{eq:decayg} then shows that the limit of the double integral is
  \[
 \Sigma_{11} = \int_{\R^d} [g(v)-1] \lim_{n\to \infty} \bigg(\frac{1}{|W_{n}|}  \int_{W_{n}}  z(u) z(u-v)^\T  \rho_{\beta^*}(u)\rho_{\beta^*}(u-v)\dd u \bigg) \dd v.
 \] 

\subsection{Lower right block of $C_n$}
The lower right block of $\Sigma_n$ has entries $|W_n|\Cov[\hat
K_{n}(r_i),\hat K_n(r_j)]$.  Assume $r_i\le r_j$. Following the proof of Lemma~1 in
\cite{waagepetersen:guan:09} this is the sum 
\begin{align*} 
&2 |W_n| \int_{W_n^2} \frac{\mathds{1}_{\{\|x-y\| \le r_i\}}}{|W_{n,x-y}|^2
      \rho_{\beta^*}(x)\rho_{\beta^*}(y)}g(x-y) \dd x \dd y\\
&+ 4 |W_n|\int_{W_n^3} \frac{\mathds{1}_{\{\|x-y\| \le r_i,\|x-z\| \le
      r_j\}}}{|W_{n,x-y}||W_{n,x-z}|\rho_{\beta^*}(x)}g^{(3)}(x,y,z)
  \dd x \dd y \dd z,\\
&+ |W_n| \int_{W_n^4} \frac{\mathds{1}_{\{\|x-y\| \le r_i,\|z-w\| \le
      r_j\}}}{|W_{n,x-y}||W_{n,z-w}|} [g^{(4)}(x,y,z,w)-g(x-y)g(z-w)]
  \dd u \dd v \dd w \dd z. 
  \end{align*}
 The first term equals 
  \begin{align*} 
  2  \int_{\R^d} g(w)\mathds{1}_{\{\|w\| \le r_i\}}  \frac{|W_n|}{|W_{n,w}|^2} \int_{W_n \cap W_{n,w}}
  	\frac{1}{\rho_{\beta^*}(u)\rho_{\beta^*}(u-w)} \dd u \dd v, 
  \end{align*}
  which converges to
    \begin{align*} 
  2  \int_{\R^d} g(w)\mathds{1}_{\{\|w\| \le r_i\}}  \lim_{n\to \infty}\bigg( \frac{1}{|W_{n}|} \int_{W_n }  \frac{1}{\rho_{\beta^*}(u)\rho_{\beta^*}(u-w)} \dd u\bigg) \dd w.
  \end{align*}
The second term is 
\begin{align*} 
4 \int_{B_{r_i}(0)\times B_{r_j(0)}} g^{(3)}(0,v,w)\frac{|W_n|}{|W_{n,-w_1}||W_{n,-w_2}|}\int_{W_n \cap W_{n,-w_1} \cap W_{n,-w_2} }\frac{1}{\rho_{\beta^*}(u)}
\dd u \dd v \dd w.
\end{align*}
Ergodicity and  dominated convergence yields that the limit exists and is
\begin{align*} 
4 \int_{B_{r_i}(0)\times B_{r_j(0)}} g^{(3)}(0,v,w)\lim_{n\to \infty}\bigg(\frac{1}{|W_{n}|}\int_{W_n} \frac{1}{\rho_{\beta^*}(u)}
\dd u \bigg) \dd v \dd w.
\end{align*}
After change of variables $u_1=x-y$, $u_2=z-w$, $u_3=z$, $u_4=w$, the fourth term is  
\[ \int_{W_n \times B_{r_i}(0)\times B_{r_j}(0)} \frac{|W_n|}{|W_{n,u_1}||W_{n,u_2}|}\int_{W_n}
    [g^{(4)}(0,u_1,u_4,u_4+u_2)-g(u_1)g(u_2)] \dd u_3 \dd u_1 \dd u_2
    \dd u_4, \]
which by \eqref{eq:decayg4} converges to
\[ \int_{B_{r_i}(0) \times B_{r_j}(0)}  \int_{\R^d}
    [g^{(4)}(0,u_1,u_4,u_4+u_2)-g(u_1)g(u_2)]\dd u_4 \dd u_1 \dd u_2
    .\]
Hence the lower right block of $\Sigma_n$ converges to a matrix $\Sigma_{22}$.
\subsection{Lower left block of $\Sigma_n$}
The $ij$th entry in the lower left block of $\Sigma_n$ is $\Cov[ \hat
K_n(r_i),e_n(\beta^*)_j]$. This covariance is 
\begin{align*} 
&\int_{W_n^3} \frac{\mathds{1}_{\{\|x-y\|\le r_i\}}}{|W_{n,x-y}|}z_j(w)
  \rho_{\beta^*}(w)[g^{(3)}(x,y,w)-g(x-y)] \dd x \dd y \dd w\\
&  + 2 \int_{W_n^2}  \frac{\mathds{1}_{\{\|x-y\|\le r_i}\}}{|W_{n,x-y}|}z_j(x) g(x-y) \dd x
  \dd y .
  \end{align*}
The first term equals
\[ \int_{\R^{d}\times B_{r_i}(0) }  [g^{(3)}(0,v_1,v_2)-g(v_1)] \frac{1}{|W_{n,-v_1}|}\int_{W_n\cap W_{n,-v_1}\cap W_{n,-v_2}} z_j(u+v_2)
\rho_{\beta^*}(u+v_2)\dd u \dd v_1 \dd v_2, \] 
which by dominated convergence, ergodicity, and \eqref{eq:decayg3}  converges to 
\[ \int_{\R^{d}\times B_{r_i}(0) }  [g^{(3)}(0,v_1,v_2)-g(v_1)] \lim_{n\to \infty}\bigg(\frac{1}{|W_{n}|}\int_{W_n} z_j(u+v_2) \rho_{\beta^*}(u+v_2)\dd u \bigg)\dd v_1 \dd v_2. \]
The last term is asymptotically equivalent to
\[2 \int_{\R^n} g(v)  \frac{\mathds{1}_{\{\|v\|\le r_i\}}}{|W_{n,v}|} \int_{W_n\cap W_{n,v}}z_j(u)  \dd u
\dd v, \]
which converges to $2K(r_i)\bar z$.
Hence, the lower left block of $\Sigma_n$ converges to a fixed matrix $\Sigma_{21}$.

\section{Variance of \eqref{eq:varHdiff}} \label{sec_Hvar}
By Campbell's formula, $\Var(X_{i,n})$ is
\begin{align*} 
&2  \int_{W_n^2} \frac{\mathds{1}_{A(r_i,r_{i+1})}(x-y)}{|W_n\cap W_{n,x-y}|^2
	\rho_{\beta^*}(x)^3\rho_{\beta^*}(y)^3} g(x-y) \left\|\frac{\d}{\d \beta} (\rho_{\beta^*}(x)\rho_{\beta^*}(y)) \right\|^2\dd x \dd y\\
&+ 4 \int_{W_n^3} \frac{\mathds{1}_{A(r_i,r_{i+1})}(x-y) \mathds{1}_{A(r_i,r_{i+1})}(x-z)}{|W_n\cap W_{n,x-y}|| W_n\cap W_{n,x-z}|\rho_{\beta^*}(x)^3\rho_{\beta^*}(y)\rho_{\beta^*}(z)}g^{(3)}(x,y,z) \\
&\times \left\|\frac{\d}{\d \beta} (\rho_{\beta^*}(x)\rho_{\beta^*}(y)) \right\| \left\|\frac{\d}{\d \beta} (\rho_{\beta^*}(x)\rho_{\beta^*}(z)) \right\|
\dd x \dd y \dd z,\\
&+ \int_{W_n^4} \frac{\mathds{1}_{A(r_i,r_{i+1})}(x-y) \mathds{1}_{A(r_i,r_{i+1})}(z-w) }{|W_n\cap W_{n,x-y}||W_n\cap W_{n,z-w}|\rho_{\beta^*}(x)\rho_{\beta^*}(y)\rho_{\beta^*}(z)\rho_{\beta^*}(w)} \\
& \times [g^{(4)}(x,y,z,w)-g(x-y)g(z-w)] \left\|\frac{\d}{\d \beta} (\rho_{\beta^*}(x)\rho_{\beta^*}(y)) \right\| \left\|\frac{\d}{\d \beta} (\rho_{\beta^*}(w)\rho_{\beta^*}(z)) \right\|
{\dd x \dd y} \dd w \dd z. 
\end{align*}
Using the assumption \eqref{eq:rhoassumption} on $\rho$,  boundedness of $g^{(l)}$ \eqref{eq:boundedg} and \eqref{eq:decayg4}, this is bounded by
\begin{align*} 
&2C  \int_{W_n^2} \frac{\mathds{1}_{A(r_i,r_{i+1})}(x-y)}{|W_n\cap W_{n,x-y}|^2
}  \dd x \dd y\\
&+ 4 C \int_{W_n^3} \frac{\mathds{1}_{A(r_i,r_{i+1})}(x-y) \mathds{1}_{A(r_i,r_{i+1})}(x-z)}{|W_n\cap W_{n,x-y}||W_n\cap W_{n,x-z}| }
\dd x \dd y \dd z,\\
&+ C \int_{W_n^4} \frac{\mathds{1}_{A(r_i,r_{i+1})}(x-y) \mathds{1}_{A(r_i,r_{i+1})}(z-w)}{|W_n\cap W_{n,x-y}||W_n\cap W_{n,z-w}|} [g^{(4)}(x,y,z,w)-g(x-y)g(z-w)] {\dd x \dd y} \dd w \dd z. 
\end{align*}
which is clearly of the right order.

\end{document}